\DeclareSymbolFont{largesymbol}{OMX}{yhex}{m}{n}
\DeclareMathAccent{\Widehat}{\mathord}{largesymbol}{"62}
\newtheorem{theorem}{Theorem}
\newtheorem{assumption}{Assumption}
\newtheorem{proof}{proof}
\title{Bounding the largest intersection number of regular simplicial partitions}
\author{
  Zhiheng Zhang \\
  Institute for Interdisciplinary Information Sciences, Tsinghua University, \\
  \texttt{zhiheng-20@mails.tsinghua.edu.cn} \\
}
\begin{document}
\maketitle

\begin{abstract}
In this note, we first identify the maximum intersection number of the regular simplicial partitions.
\end{abstract}

\keywords{Simplicial partitions \and Regularity}

\section{Introduction}
The concept of simplicial partitions is common and useful in Finite element analysis~\cite{segerlind1991applied, bhavikatti2005finite}. Simplicial partition tends to cut the space being searched into regularised chunks, finding the optimal solution to the objective function over the whole space by iteration, e.g., branch-and-bound~\cite{lawler1966branch}. In this process, it is particularly important what geometry the simplex can maintain, as this can seriously interfere with the efficiency of the solution. For example, if, after enough partitions, the simplex degenerates approximately to a hyperplane, there is no guarantee that the theoretical optimal point of the objective function will belong to any simplex (it may be wrapped in many or even an infinite number of simplices). This uncertainty about the region to which the solution belongs can make the solution to the optimization problem significantly more expensive.

There is already a range of widely adopted conditions to constrain the regularity of simplicial partitions. In~\cite{brandts2009equivalence}, the authors summarized three kinds of equivalent conditions, which are called balled conditions. Meanwhile, there are also other conditions such as minimum angle conditions as in~\cite{ZLAMAL1968}. People have already demonstrated the minimum angle condition is equivalent to the three kinds of balled conditions above in the 3-dimensional cases. In the high-dimensional cases, authors~\cite{brandts2011generalization} developed a generalized version of the minimum angle condition. 

\section{Problem Formulation}
In this paper, we directly quote one of the most widely-used balled conditions to define the regularity, the other two balled condition~\cite{ZLAMAL1968} are both equivalent.

\begin{assumption}{\textbf{(Regularity assumption~\cite{brandts2009equivalence, ZLAMAL1968, brandts2011generalization})}}
For each simplex $S$ in simplicial partitions, there exists a constant $\eta$ such that
\begin{equation}
    \begin{aligned}
    vol(S) \geq \eta {h(S)}^{d}.
    \end{aligned}
\end{equation}
Here $vol(S)$ denotes the volumn of simplex $S$, $h(S)$ denotes the longest edge of $S$, and $d\geq 2$ is the dimension.

\label{ass}
\end{assumption}

Roughly speaking, the regularity assumption guarantees the simplex would not degenerate to the hyper-plane, or else $\lim\limits_{vol(S)\rightarrow 0} \frac{vol(S)}{{h(S)}^d} = 0$. In other words, $\frac{vol(S)}{{h(S)}^d}$ can be higher when the simplex ``seems to be regular'', namely each edge keeps the same length. In addition, the simplicial partitions during partitioning are denoted as $\mathcal{S}_0, \mathcal{S}_1, \mathcal{S}_2...$. Our problem can be summarized as follows: 

\emph{what is the maximum intersection number of regular simplicial partitions? In other words, for an arbitrary point in $\mathcal{S}_0$, what is the maximum number of simplices it can be affiliated with during partitioning?}

\section{Main result}
In this part, we present our main result as follows:

\begin{theorem} 
Suppose that Ass.~\ref{ass} holds. Each point in $\mathcal{S}_0$ is included within at most $\frac{1}{\eta}\left(\frac{2e \pi}{ d}\right)^{\frac{d}{2}}$ simplices. 
 
\end{theorem}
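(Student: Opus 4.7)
The plan is to prove the bound at any single stage $\mathcal{S}_k$ of the partitioning by a solid-angle argument centered at $p$. Let $S_1, \ldots, S_N$ enumerate the simplices of $\mathcal{S}_k$ that contain the given point $p$, and write $h_i = h(S_i)$. Since the diameter of a simplex equals its longest edge, each $S_i$ sits inside $B(p, h_i)$.

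The key step I would prove is a lower bound on the solid angle that each $S_i$ subtends at $p$. Letting $\Omega_i$ denote the solid angle of the tangent cone $T_p(S_i)$, convexity of $S_i$ yields $S_i - p \subseteq T_p(S_i)$, and combining with the ball containment above,
\[
vol(S_i) \;\leq\; vol\bigl((p + T_p(S_i)) \cap B(p, h_i)\bigr) \;=\; \frac{\Omega_i}{d}\, h_i^d.
\]
Coupled with Ass.~\ref{ass}, i.e.\ $vol(S_i) \geq \eta h_i^d$, this forces $\Omega_i \geq d\eta$. I would then sum over $i$: since the $S_i$ are interior-disjoint, so are their tangent cones at $p$, and a standard density argument (comparing $\sum_i vol(S_i \cap B(p, r))$ to $vol(B(p,r)) = V_d r^d$ as $r \to 0$, using $vol(S_i \cap B(p,r)) = \Omega_i r^d/d + o(r^d)$) yields $\sum_i \Omega_i \leq d V_d$, where $V_d = \pi^{d/2}/\Gamma(d/2+1)$ is the volume of the unit $d$-ball.

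Dividing these inequalities gives $N \leq V_d / \eta$. The remaining step is a Stirling estimate: the bound $\Gamma(d/2+1) \geq \sqrt{\pi d}\,(d/(2e))^{d/2}$ yields
\[
V_d \;\leq\; \frac{1}{\sqrt{\pi d}}\Bigl(\frac{2e\pi}{d}\Bigr)^{d/2} \;\leq\; \Bigl(\frac{2e\pi}{d}\Bigr)^{d/2},
\]
which matches the stated bound.

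The main obstacle I anticipate is the conic volume identity $vol\bigl((p + T_p(S_i)) \cap B(p, h_i)\bigr) = \Omega_i h_i^d/d$ when $p$ lies on an arbitrary face of $S_i$, rather than at a vertex alone. The tangent cone must be defined cleanly so that both the inclusion $S_i - p \subseteq T_p(S_i)$ and the explicit volume formula hold uniformly across vertex, higher-dimensional face, and interior cases; the interior case corresponds to $\Omega_i = dV_d$, in which case only one simplex can contain $p$. Once this geometric bookkeeping is in place, the solid-angle summation and the Stirling estimate are routine.
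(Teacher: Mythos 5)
Your proof is correct, and it reaches the stated bound (in fact a slightly stronger one). Both you and the paper run the same basic counting scheme --- fix the point $p=\gamma_0$, associate to each containing simplex its tangent cone at $p$, observe that these cones are interior-disjoint so their solid angles sum to at most the full solid angle, and use Ass.~\ref{ass} together with $\operatorname{dia}(S)=h(S)$ to bound each simplex's solid angle from below. Where you differ is in the key lemma that produces that lower bound. The paper measures the solid angle of the cone $\{\mathcal{W}_S\gamma\geq 0\}$ by the Gaussian integral $\int e^{-\|\gamma\|^2}d\gamma$, lower-bounds this by integrating only over a dilate $tS$ of the simplex (so the integrand is at least $e^{-t^2\operatorname{dia}(S)^2}$ there), and then optimizes over $t$; the optimization at $x=\sqrt{d/2}$ is exactly what produces the factor $(d/2)^{d/2}e^{-d/2}$ and hence the constant $(2e\pi/d)^{d/2}$ in the theorem. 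You instead compare $\operatorname{vol}(S_i)$ directly with the volume of the cone truncated at radius $h_i$, getting the exact inequality $\Omega_i\geq d\eta$ with no slack from an exponential weight or a scaling parameter; this yields $N\leq V_d/\eta=\pi^{d/2}/(\eta\,\Gamma(d/2+1))$, which is sharper than the paper's bound by roughly a factor $\sqrt{\pi d}$ (the paper's constant is recovered only when you apply Stirling at the end). Your approach is the more elementary and quantitatively better one; the one piece of bookkeeping you flag --- defining $T_p(S_i)$ so that the inclusion, the conic volume formula, and the local identification $(S_i-p)\cap U=T_p(S_i)\cap U$ all hold when $p$ lies on a face of positive dimension --- is genuinely needed for the density argument $\sum_i\Omega_i\leq dV_d$, but it is standard for polytopes, and the paper glosses over the same point when it identifies $A(\mathcal{B}(\gamma_0,r)\cap S)$ with the spherical measure of the cone $\{\mathcal{W}_S\gamma\geq 0\}$ for small $r$.
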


\begin{proof}
For $\gamma_0 \in \mathcal{S}_0$, we construct a ball $\mathcal{B}(\gamma_0, r)$. We use $A(\cdot)$ to denote the surface area of the sphere:
\begin{equation}
    \begin{aligned}
    A\left(\mathcal{B}\left(\gamma_0, r\right) \cap \mathcal{S}_0\right)=\sum_{S \in \mathcal{S}_k, \gamma_0 \in S} A\left(\mathcal{B}\left(\gamma_0, r\right) \cap {S} \right).
    \end{aligned}\label{original}
\end{equation}
On the one hand, the LHS of Eqn.~\ref{original} can be upper bounded as:
\begin{equation}
    \begin{aligned}
    A\left(\mathcal{B}\left({\gamma_0}, r\right) \cap S_0\right) \leq A\left(\mathcal{B}\left({\gamma_0}, r\right)\right)=\frac{2 \pi^{\frac{ d}{2}}}{\Gamma\left(\frac{ d}{2}\right)} r^{d-1}<+\infty .
    \end{aligned}\label{A_S_0}
\end{equation}
On the other hand, we calculate the RHS of Eqn.~\ref{original}. To solve it, we take
advantage of the following integral (we take $\gamma_0$ as the origin):
\begin{equation}
    \begin{aligned}
    \int_{\mathcal{W}_S \gamma \geq 0} e^{-\|\gamma\|^2} d \gamma.
    \end{aligned}
\end{equation}
Here for each term on the right side, the sphere of the ball is cut by certain facets of each S, whose
normal vector is denoted as a set $\mathcal{W}_S$, whose each row denotes a $1 * d$ normal vector. Without loss of generation, for any facets, we assume that its normal vector points to the remaining
supporting vector, namely its inner product is positive.

Here $\| \cdot \|$ also denotes the Euclidean norm. If we use the polar coordinates, we can get a new expression by the differential element method:

\begin{equation}
    \begin{aligned}
    \int_{\mathcal{W}_S \gamma \geq 0} e^{-\|\gamma\|^2} d \gamma=\int_{\mathcal{W}_S \gamma \geq 0} d \Omega \int_0^{+\infty} e^{-l^2} l^{ d-1} d l=\frac{A\left(\mathcal{B}\left(\gamma, r\right) \cap S\right)}{r^{ d-1}} \int_0^{+\infty} e^{-l^2} l^{ d-1} d l.
    \end{aligned}\label{A_S}
\end{equation}
According to Eqn.~\ref{A_S_0}-\ref{A_S}, we have
\begin{equation}
    \begin{aligned}
A\left(\mathcal{B}\left(\gamma_0, r\right) \cap S\right) &=\frac{2 \int_{\mathcal{W}_S \gamma \geq 0} e^{-\|\gamma\|^2} d \gamma}{\Gamma\left(\frac{d}{2}\right)} r^{d-1} \\
\forall k, \frac{A\left(\mathcal{B}\left(\gamma_0, r\right) \cap S\right)}{A\left(\mathcal{B}\left(\gamma_0, r\right) \cap \mathcal{S}_0\right)} & \geq \frac{\int_{\mathcal{W}_S \gamma \geq 0} e^{-\|\gamma\|^2} d \gamma}{\pi^{\frac{d}{2}}}.
    \end{aligned}
\end{equation}
Hence we only need to prove the integral is lower bounded by a constant above zero. We will do this by extracting a sub-space from $\mathcal{W}_S \gamma \geq 0$, which is easy to be integrated. Specifically, we consider a subspace which is an affine transformation on $\mathcal{W}_S \gamma \geq  0$. We introduce the diameter of simplex $S$ as $\operatorname{dia}(S) = \max _{s_1, s_2 \in S}\left\|s_1-s_2\right\|$.

\begin{equation}
    \begin{aligned}
\int_{\mathcal{W}_S \gamma \geq 0} e^{-\|\gamma\|^2} d \gamma & \geq \int_{\gamma=t \gamma^{\prime}, \gamma^{\prime} \in S} e^{-\|\gamma\|^2} d \gamma \quad(\forall t>0, t \text { is arbitrarily chosen }) \\
&=\int_{\gamma^{\prime} \in S} t^{d} e^{-t^2\left\|\gamma^{\prime}\right\|^2} d \gamma^{\prime} \\
& \stackrel{*}{\geq } t^d \operatorname{Vol}(S) e^{-t^2 \operatorname{dia}(S)^2} \\
& \stackrel{* *}{\geq} \eta t^{ d} h(S)^{ d} e^{-t^2 \operatorname{dia}(S)^2}
\end{aligned}
\end{equation}
$*$ is due to $\forall \gamma^{\prime} \in S$, we have $\left\|\gamma^{\prime}\right\| \leq \operatorname{dia}(S)$, since $\gamma_0$ is chosen as the origin. $* *$ is due to Ass.~\ref{ass}. Additionally, we further show that in the above Formulation, $h(S)=\operatorname{dia}(S)$. Namely, the longest edge of simplex always serves as the diameter. It is equal to prove ($S^i$ denotes the supporting vector):
\begin{equation}
    \begin{aligned}
\operatorname{dia}(S) &=\max _{s_1, s_2 \in S}\left\|s_1-s_2\right\| \quad\left(s_2=\sum_i \lambda_i S^i, \lambda_i \in[0,1]\right) \\
&=\max _{s_1, s_2 \in S}\left\|\left(\sum_i \lambda_i\right) s_1-\sum_i\left(\lambda_i S^i\right)\right\| \\
&=\max _{s_1, s_2 \in S}\left\|\sum_i \lambda_i\left(s_1-S^i\right)\right\| \\
& \leq \max _{s_1 \in S} \max _i\left\|s_1-S^i\right\| \\
& \stackrel{*}{\leq} \max _{i, j}\left\|S^j-S^i\right\| \leq h(S).
\end{aligned}
\end{equation}
$*$ is due to we also do the expansion on $s_1$, namely $s_1=\sum_i \lambda_i^{\prime} S^i, \lambda_i^{\prime} \in[0,1]$. On the other hand, we have $h(S) \leq \max _{s_1, s_2 \in S}\left\|s_1-s_2\right\|=\operatorname{dia}(S)$ by definition. Thus $\operatorname{dia}(S)=h(S)$. Hence
\begin{equation}
    \begin{aligned}
    \forall t, \int_{\mathcal{W}_S \boldsymbol{\gamma} \geq 0} e^{-\|\gamma\|^2} d \boldsymbol{\gamma} \geq \eta(t d i a(S))^{d} e^{-\left(t \operatorname{dia}(S)^2\right)}.
    \end{aligned}
\end{equation}

Due to the arbitrary of $t$, we have
\begin{equation}
    \begin{aligned}
    \int_{\mathcal{W}_S \boldsymbol{\gamma} \geq 0} e^{-\|\gamma\|^2} d \boldsymbol{\gamma} \geq \eta \max _x x^{d} e^{-x^2}=\left.\eta x^{d} e^{-x^2}\right|_{x=\sqrt{\frac{d}{2}}}=\eta(\frac{d}{2})^{\frac{d}{2}} e^{- \frac{d}{2}}.
    \end{aligned}
\end{equation}
Finally, we have
\begin{equation}
    \begin{aligned}
    \forall k, \frac{A\left(\mathcal{B}\left(\gamma, r\right) \cap S\right)}{A\left(\mathcal{B}\left(\gamma, r\right) \cap S_0\right)} \geq \eta\left(\frac{d}{2e \pi}\right)^{\frac{d}{2}}, \text{the~intersection~number~} N \leq \frac{1}{\eta}\left(\frac{2e \pi}{ d}\right)^{\frac{d}{2}}<+\infty.
    \end{aligned}
\end{equation}

\end{proof}
In future work, I will further explore the tight upper bound of the intersection number of the regular simplicial partitions.

\section{Acknowledgments}
This note is my additional exploration after the work~\cite{zhihengproxy}. 

\bibliographystyle{unsrt}  
\bibliography{references}

\end{document}